\newcommand{\overbar}[1]{\mkern 1.5mu\overline{\mkern-1.5mu#1\mkern-1.5mu}\mkern 1.5mu}
\newcommand{\Q}{\mathbb{Q}}
\newcommand{\Z}{\mathbb{Z}}
\newcommand{\F}{\mathbb{F}}
\newcommand{\Fq}{\mathbb{F}_q}
\newcommand{\Fqbar}{\overbar{\mathbb{F}_q}}
\newcommand{\Aut}{\operatorname{Aut}}
\newcommand{\absGal}[1][K]{\operatorname{G}_{\overbar{#1}/#1}}
\newcommand{\id}{\operatorname{id}}
\newcommand{\Twist}{\operatorname{Twist}}
\newcommand{\charact}{\operatorname{char}}
\newcommand{\Ker}{\operatorname{Ker}}
\newcommand{\Etw}{E^{\operatorname{tw}}}
\newcommand{\Frob}{\operatorname{Fr}}
\newcommand{\lmr}{\langle -1 \rangle}
\newcommand{\comment}[1]{}
\newcommand{\map}[5]{
		\begin{aligned}
			&&&&#1\colon && #2 &&\longrightarrow	&&& #3 &&&&\\
			&&&& && #4 	&&\longmapsto	&&& #5 &&&&
		\end{aligned}
}
\newcommand{\Ma}[4]{\begin{Pmatrix}#1& #2\\#3& #4\end{pmatrix}}
\newtheorem{Theorem}{Theorem}[section]
\newtheorem{Proposition}[Theorem]{Proposition}
\theoremstyle{definition}
\newtheorem{Example}[Theorem]{Example}}
\begin{document}

\allowdisplaybreaks

\newcommand{\arXivNumber}{1707.02893}

\renewcommand{\thefootnote}{}

\renewcommand{\PaperNumber}{083}

\FirstPageHeading

\ShortArticleName{Twists of Elliptic Curves}

\ArticleName{Twists of Elliptic Curves\footnote{This paper is a~contribution to the Special Issue on Modular Forms and String Theory in honor of Noriko Yui. The full collection is available at \href{http://www.emis.de/journals/SIGMA/modular-forms.html}{http://www.emis.de/journals/SIGMA/modular-forms.html}}}

\Author{Max KRONBERG~$^\dag$, Muhammad Afzal SOOMRO~$^\ddag$ and Jaap TOP~$^\dag$}

\AuthorNameForHeading{M.~Kronberg, M.A.~Soomro and J.~Top}

\Address{$^\dag$~Johan Bernoulli Institute for Mathematics and Computer Science,\\
\hphantom{$^\dag$}~Nijenborgh~9, 9747 AG Groningen, The Netherlands}
\EmailD{\href{mailto:m.c.kronberg@rug.nl}{m.c.kronberg@rug.nl}, \href{mailto:j.top@rug.nl}{j.top@rug.nl}}

\Address{$^\ddag$~Quaid-e-Awam University of Engineering, Science \& Technology (QUEST), \\
\hphantom{$^\ddag$}~Sakrand Road, Nawabshah, Sindh, Pakistan}
\EmailD{\href{mailto:m.a.soomro@quest.edu.pk}{m.a.soomro@quest.edu.pk}}

\ArticleDates{Received July 10, 2017, in f\/inal form October 23, 2017; Published online October 25, 2017}

\Abstract{In this note we extend the theory of twists of elliptic curves as presented in various standard texts for characteristic not equal to two or three to the remaining characteristics. For this, we make explicit use of the correspondence between the twists and the Galois cohomology set $H^1\big(\operatorname{G}_{\overline{K}/K}, \operatorname{Aut}_{\overline{K}}(E)\big)$. The results are illustrated by examples.}

\Keywords{elliptic curve; twist; automorphisms; Galois cohomology}

\Classification{11G05; 11G25; 14G17}

\begin{flushright}
\begin{minipage}{80mm}
\it Dedicated to Noriko Yui. The third author of this note was a postdoc with her at Queen's University during 1989--1990.
\end{minipage}
\end{flushright}

\renewcommand{\thefootnote}{\arabic{footnote}}
\setcounter{footnote}{0}

\section{Introduction}
Throughout this paper $K$ will be a perfect f\/ield and we always f\/ix a separable closure of $K$, which we denote by $\overbar{K}$. For the absolute Galois group of $\overbar{K}$ over $K$ we write $\absGal$. Let $E/K$ be an elliptic curve over $K$. A twist of $E$ is an elliptic curve $\Etw/K$ that is isomorphic to $E$ over $\overbar{K}$. In other words, it is an elliptic curve over $K$ with $j$-invariant~$j(E)$. Two such twists are considered equal if they are isomorphic over $K$. We denote the set of twists by $\Twist(E/K)$. For the automorphism group of $E$ we write $\Aut_{\overbar{K}}(E)$. The elements of $\Twist(E/K)$ are in one-to-one correspondence with the classes in $H^1\big(\absGal, \Aut_{\overbar{K}}(E)\big)$ \cite[Chapter~X, Section~2]{silverman09}. We want to remark that our notation dif\/fers from the notation used by Silverman. He denotes the set of twists by $\Twist(E/K,O)$.

Recently there has been quite some interest in twists of not only elliptic curves, but also curves in general and even in twists of algebraic varieties over various f\/ields \cite{elisa3,GouveaYui, Rachel,Elise2, Elise,MeagherTop10,Carlo}. And besides twists of varieties, twists of maps appear to have an increasing role in arithmetic dynamics \cite{Levy,Manes,SilComp,Stout}, \cite[Section~4.9]{SilAD}.

The simplest nontrivial example of twists is provided by the case of elliptic curves, where an early account of it was given by J.W.S.~Cassels in \cite[Part~II, Section~9]{cassels66}. We brief\/ly recall some of this theory here; in the case that the automorphism group of the elliptic curve is cyclic this is covered in various standard textbooks on elliptic curves. Although it is certainly known to most experts how to extend this theory to the cases where the automorphism group is noncyclic (and hence not even abelian), there seems to be no adequate reference for this and we hope to f\/ill this gap. Note, by the way, that Ian Connell at McGill University (Montreal) in the late 1990's wrote extensive unpublished lecture notes on elliptic curves, including a~long chapter on twists~\cite{Connell}. Inspired by those notes John Cremona implemented various algorithms for dealing with twists of elliptic curves in sage; see the lines 557--604 of~\cite{Cremona} for sage code related to Section~\ref{section2} of the present paper, and the lines 507--554 of~\cite{Cremona} for sage code related to our Section~\ref{section3}.

For a positive integer $n$ coprime to the characteristic of $K$ we denote by $\mu_n(\overbar{K})$ the group of $n$-th roots of unity in $\overbar K^\times$ and by $\zeta_n$ a generator of this group. In \cite{silverman09}, Silverman only presents an explicit description of the twists of an elliptic curve $E/K$ in $\charact{K}\neq2,3$. The main reason is that this condition implies $\Aut_{\overbar{K}}(E)\cong\mu_n(\overbar{K})$, for some $n\in\{2,4,6\}$, even as $\absGal$-modules. In characteristic $2$ the group $\Aut_{\overbar{K}}(E)$ is either isomorphic to $\Z/2\Z$ or to a non-abelian group of order~$24$. In characteristic three the group $\Aut_{\overbar{K}}(E)$ is either equals $\mu_2(\overbar K)\cong\Z/2\Z$ or it is a~non-abelian group of order~$12$. By explicitly describing $H^1\big(\absGal, \Aut_{\overbar{K}}(E)\big)$ in these remaining cases, we complete the description presented in~\cite{silverman09}.

We start by considering twists of elliptic curves with $j$-invariant equal to zero in characteristic three and two. We then consider the twists corresponding to normal subgroups of $\Aut_{\overbar{K}}(E)$. The possible subgroups correspond to quadratic, cubic and sextic twists.

The main results of this note can be found in Propositions~\ref{2.1} and~\ref{3.1} which describe twists over f\/inite f\/ields of characteristic three respectively two, provided the automorphism group of the elliptic curve is non-abelian, and in Propositions~\ref{charthree} and~\ref{chartwo}, where we count the number of twists of any elliptic curve over a f\/inite f\/ield. Sections~\ref{section2} and~\ref{section3} also indicate how these twists can be given explicitly in various non-trivial cases. Next, Propositions~\ref{4.1},~\ref{5.1} and~\ref{6.1} answer the question under what conditions a~potentially quadratic or cubic twist of a given elliptic curve is in fact still isomorphic to the curve one starts with.

Parts of the results of this paper originate from the PhD thesis of the second author \cite[Section~2.6]{Soomro13}.

\section{Twists in characteristic three}\label{section2}
We start by considering elliptic curves over f\/inite f\/ields $\F_{3^n}$. This is done by analysing the following central example. By~\cite{MeagherTop10}, the twists of an elliptic curve $E$ over a f\/inite f\/ield $\F=\F_q$ of cardinality $q$ are in one-to-one correspondence with the Frobenius conjugacy classes in $\Aut_{\overbar{\F}}(E)$. By def\/inition a Frobenius conjugacy class is obtained by f\/ixing some $\tau\in\Aut_{\overbar{\F}}(E)$, then its Frobenius conjugacy class consists of all
\begin{gather*}
\big\{ \sigma^{-1}\tau \big({}^{\Frob}\sigma\big)\big\}.
\end{gather*}
Here $\Frob$ is the f\/ield automorphism of $\overbar{\F}$ raising any element to its $q$th power. It acts on an automorphism $\sigma$ of $E$ by acting on the coef\/f\/icients of the rational functions def\/ining $\sigma$. We will compute these Frobenius conjugacy classes for all possible actions of the absolute Galois group.

\begin{Proposition}\label{2.1} 	The elliptic curve
\begin{gather*}
E/\mathbb{F}_3\colon \ y^2=x^3-x
\end{gather*}
has $j(E)=0$, and it has precisely twelve automorphisms. These are given by
\begin{gather*}
\map{\Phi_{u,r}}{E}{E,}{(x,y)}{\big(u^2x+r,u^3y\big),}
\end{gather*}
where $u^4=1$ and $r\in\F_3$. Let $n\geq 1$ and $q=3^n$. For $u\in\F_9$ with $u^4=1$ and $r\in\F_3$, put{\samepage
\begin{gather*}
C_{u,r}=\big\{\Phi_{u',r'}^{-1}\circ\Phi_{u,r}\circ\Phi_{u'^q,r'^q}\,|\, (u')^4=1,\,r'\in\F_3 \big\}
\end{gather*}
$($this is the Frobenius conjugacy class of $\Phi_{u,r}$ over $\F_q)$.}

If $n$ is odd, then the Frobenius conjugacy classes of $E/\F_q$ are
\begin{gather*}
	C_{1,0}=\{\Phi_{1,0}, \Phi_{-1,0}\},\qquad C_{1,1}=\{\Phi_{1,1},\Phi_{-1,-1}\},\qquad C_{1,-1}=\{\Phi_{1,-1},\Phi_{-1,1}\},\\
 C_{i,0}=\{\Phi_{u,r}\,|\, u^2=-1,\,r \in \F_3\}.
\end{gather*}
In particular there are precisely three non-trivial twists of $E$ over $\F_q$ in case $n$ is odd, which are given as follows.

Consider the cocycle defined by $\Frob\mapsto \Phi_{u,0}$. The corresponding twist is given by
\begin{gather*}\Etw\colon \ y^2=x^3+x
\end{gather*}
and the corresponding isomorphism is defined over a quadratic extension.

Analogously the cocycle $\Frob\mapsto\Phi_{1,1}$ corresponds to the twist
\begin{gather*}
	\Etw\colon \ y^2=x^3-x-1
\end{gather*}
and the cocycle $\Frob\mapsto\Phi_{1,-1}$ corresponds to the twist
\begin{gather*}
	\Etw\colon \ y^2=x^3-x+1,
\end{gather*}
where both isomorphisms are defined over a cubic extension.

In case $n$ is even, fix $i\in\F_q$ with $i^2=-1$. The Frobenius conjugacy classes of $E/\F_q$ are
\begin{gather*}
C_{1,0}=\{\Phi_{1,0}\},\qquad C_{-1,0}=\{ \Phi_{-1,0}\},\qquad C_{1,1}=\{\Phi_{1,1},\Phi_{1,-1}\},\\ C_{-1,1}=\{\Phi_{-1,1},\Phi_{-1,-1}\},\qquad
C_{i,0}=\{\Phi_{i,r}\,|\, r \in \F_3\},\qquad C_{-i,0}=\{\Phi_{-i,r}\,|\, r \in \F_3\}.
\end{gather*}

The corresponding isomorphisms are defined over a field extension of degree $1$, $2$, $3$, $6$, $4$, $4$, respectively.

Equivalently, since we are considering here the number of $\F_{3^n}$-isomorphism classes of elliptic curves with $j$-invariant~$0$, i.e., of supersingular elliptic curves over $\F_{3^n}$, this shows there are $4$ supersingular curves when~$n$ is odd and~$6$ such curves when~$n$ is even. This is of course well known; it is consistent with the tables presented in~{\rm \cite{Schoof}}.
\end{Proposition}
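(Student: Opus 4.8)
The plan is to reduce every assertion to finite computations inside the explicit group $\Aut_{\Fqbar}(E)$, so I first pin that group down together with its multiplication and the $\Frob$-action. Substituting a general change of variables $(x,y)\mapsto(u^2x+r,u^3y)$ into $y^2=x^3-x$ and using the characteristic-three identity $(u^2x+r)^3=u^6x^3+r^3$, matching the coefficient of $x$ and the constant term forces $u^4=1$ and $r^3=r$, i.e.\ $r\in\F_3$; this produces the twelve maps $\Phi_{u,r}$ at once. The same substitution yields
\[
\Phi_{u,r}\circ\Phi_{v,s}=\Phi_{uv,\,u^2s+r},\qquad\text{whence}\qquad\Phi_{u,r}^{-1}=\Phi_{u^{-1},\,-u^2r}.
\]
Since $r\in\F_3$ is $\Frob$-fixed, $\Frob(\Phi_{u,r})=\Phi_{u^q,r}$; and as $i$ (with $i^2=-1$) lies in $\Fq$ exactly when $n$ is even, we have $u^q=u$ for $u=\pm1$ always, while $u^q=-u$ for $u^2=-1$ precisely when $n$ is odd.

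Everything then follows from one master computation. Feeding the above into the definition of $C_{u,r}$ gives
\[
\Phi_{u',r'}^{-1}\circ\Phi_{u,r}\circ\Phi_{u'^q,r'}=\Phi_{u\,u'^{\,q-1},\ u'^2\left[(u^2-1)r'+r\right]}.
\]
Letting $(u',r')$ range over all twelve pairs and splitting on whether $u\in\{\pm1\}$ (so $u^2-1=0$) or $u^2=-1$ (so $u^2-1=1$ in $\F_3$), the second coordinate either stays in $\{\pm r\}$ or sweeps all of $\F_3$. For $n$ odd the first coordinate $u\,u'^{\,q-1}$ runs over $\{u,-u\}$, giving the three two-element classes $C_{1,0},C_{1,1},C_{1,-1}$ and the six-element class $C_{i,0}$; for $n$ even $u'^{\,q-1}=1$ fixes the first coordinate, so Frobenius conjugacy reduces to ordinary conjugacy and the same bookkeeping yields the six listed classes. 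In each case I would verify that the class sizes sum to twelve as a consistency check; the number of classes, $4$ versus $6$, is the number of twists, hence the asserted supersingular counts.

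To find the degree over which each twist becomes isomorphic to $E$, I would use that the isomorphism descends to $\F_{q^m}$ exactly when $\Phi_{u,r}\circ\Frob(\Phi_{u,r})\circ\cdots\circ\Frob^{m-1}(\Phi_{u,r})=\id$, and take the least such $m$. For $n$ even $\Frob$ is trivial, so $m$ is the order of $\Phi_{u,r}$ in the group, giving $1,2,3,6,4,4$. For $n$ odd the alternation $u\mapsto-u$ makes the product for $C_{i,0}$ collapse after two steps, since $\Phi_{i,0}\circ\Phi_{-i,0}=\id$, giving a quadratic extension, while the translation classes $C_{1,\pm1}$ keep genuine order three.

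The delicate part is the explicit equations for $n$ odd. For $C_{i,0}$ I would solve $\alpha^4=-1$ (so $\alpha$ generates the quadratic extension of $\Fq$) and transport along $(x,y)\mapsto(\alpha^2x,\alpha^3y)$, turning $y^2=x^3-x$ into $\Etw\colon y^2=x^3+x$. For $C_{1,\pm1}$, whose cocycle is the translation $(x,y)\mapsto(x\pm1,y)$, I would solve the additive equation $c^q-c=\pm1$ over $\F_{q^3}$ --- solvable because $\operatorname{Tr}_{\F_{q^3}/\Fq}(\pm1)=3(\pm1)=0$ in characteristic three, and forcing $c\notin\Fq$ so the extension is genuinely cubic --- and transport along $(x,y)\mapsto(x+c,y)$, landing on $\Etw\colon y^2=x^3-x+(c-c^3)$ with $c-c^3\in\Fq$. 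The main obstacle lies exactly here: confirming that this is a genuinely nontrivial twist equal to the asserted $y^2=x^3-x\mp1$. The clean way is the telescoping identity $\operatorname{Tr}_{\Fq/\F_3}(c-c^3)=-(c^q-c)=\mp1$, which is nonzero; since two curves $y^2=x^3-x+a_6$ are $\Fq$-isomorphic iff their constants have equal trace to $\F_3$ (the admissible substitutions alter $a_6$ only by $s-s^3$, whose image is the trace-zero hyperplane), the twist is nontrivial for every odd $n$, and $a_6=\mp1$ is a valid representative whenever $-1$ itself has trace $\mp1$ over $\Fq$.
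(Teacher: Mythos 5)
Your proposal is correct, and its skeleton is the same as the paper's own proof: determine the composition law $\Phi_{u,r}\circ\Phi_{v,s}=\Phi_{uv,\,u^2s+r}$, hence $\Phi_{u,r}^{-1}=\Phi_{u^{-1},-u^2r}$ and ${}^{\Frob}\Phi_{u,r}=\Phi_{u^q,r}$, compute the Frobenius conjugacy classes directly, and certify explicit equations by exhibiting $\psi$ and computing $\big({}^{\Frob}\psi\big)^{-1}\circ\psi$. Your closed formula $\Phi_{u',r'}^{-1}\circ\Phi_{u,r}\circ\Phi_{u'^q,r'}=\Phi_{u u'^{q-1},\,u'^2[(u^2-1)r'+r]}$ checks out and packages in one line what the paper states as a list; your class lists and extension degrees agree with the paper's in both parities. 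One small caution: your descent criterion $\Phi\circ{}^{\Frob}\Phi\circ\cdots\circ{}^{\Frob^{m-1}}\Phi=\id$ detects when your \emph{chosen} $\psi$ is defined over $\F_{q^m}$; minimality of $m$ also requires that a nonidentity value not be Frobenius-trivial over $\F_{q^m}$. This is harmless here (for the relevant even $m$ the field contains $\F_9$, so the trivial Frobenius class is $\{\id\}$), but it deserves a sentence.

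The substantive difference is your treatment of the cubic equations, where the paper says only ``the other cases are done similarly''. You construct $c\in\F_{q^3}$ with $c^q-c=\pm1$ (existence via $\operatorname{Tr}_{\F_{q^3}/\F_q}(\pm 1)=0$ in characteristic three), land on $y^2=x^3-x+(c-c^3)$, and invoke the correct Artin--Schreier criterion: for odd $n$ only $u=\pm1$ is available, so $y^2=x^3-x+b$ and $y^2=x^3-x+b'$ are $\F_q$-isomorphic if and only if $\operatorname{Tr}_{\F_q/\F_3}(b)=\operatorname{Tr}_{\F_q/\F_3}(b')$. Your telescoping identity $\operatorname{Tr}_{\F_q/\F_3}(c-c^3)=c-c^q$ then proves nontriviality for every odd $n$ --- and it exposes a genuine flaw in the proposition as stated, which your careful final clause only hints at and which you should make explicit. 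Since $\operatorname{Tr}_{\F_q/\F_3}(\mp1)=\mp n \bmod 3$, the displayed representatives $a_6=\mp1$ are valid exactly when $3\nmid n$, with the pairing to $\Phi_{1,1}$ versus $\Phi_{1,-1}$ switching between $n\equiv 1$ and $n\equiv 2 \pmod 3$ (and sensitive to the cocycle convention: with the paper's own $\big({}^{\Frob}\psi\big)^{-1}\circ\psi$ and $q=3$, the curve $y^2=x^3-x-1$ yields $\Phi_{1,-1}$, not $\Phi_{1,1}$). For odd $n$ divisible by $3$ both displayed curves capitulate: for instance $r^3-r-1$ splits in $\F_{27}$, and $x\mapsto x+r$ is an $\F_{27}$-isomorphism from $y^2=x^3-x$ to $y^2=x^3-x-1$, so neither equation is then a nontrivial twist; the correct representatives must have $a_6$ of nonzero trace, which for $3\mid n$ forces $a_6\notin\F_3$. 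So your argument proves a corrected version of the statement; state the congruence condition $n\not\equiv 0\pmod 3$ explicitly, and fix the sign bookkeeping in your last sentence, where validity of the representative $\mp1$ should read $\operatorname{Tr}_{\F_q/\F_3}(\mp1)=\mp1$, i.e.\ $n\equiv1\pmod 3$, rather than a condition on $\operatorname{Tr}_{\F_q/\F_3}(-1)$ alone.
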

\begin{proof} The statements about the $j$-invariant and about the number of automorphisms are easy; compare, e.g., \cite[Appendix~A, Proposition~1.2]{silverman09}.

Since $\Phi_{u,r}^{-1}=\Phi_{u^{-1},-u^2r}$ and $ ^{\Frob}\Phi_{u,r}=\Phi_{u^3,r^3}=\Phi_{u^{-1},r}$, one can directly calculate the Frobenius conjugacy class of $\Phi_{u,r}$, depending on $q$ being an even or an odd power of $3$.

To verify that indeed the curves presented in the statement of the proposition correspond to the given Frobenius conjugacy classes, one needs to use an isomorphism $\psi\colon E\to \Etw$ and check that $\big( ^{\Frob}\psi\big)^{-1}\circ\psi$ is in the Frobenius conjugacy class. For example, with $\Etw\colon y^2= x^3+x$ one can use $\psi\colon E\to\Etw$ is given by $(x,y)\mapsto(ix,-iy)$ (with $i^2=-1$). A direct computation shows that
$\big( ^{\Frob}\psi\big)^{-1}\circ\psi=\Phi_{i,0}$. The other cases are done similarly.
\end{proof}

Note that we only presented explicit equations for the twists in the case of an extension of~$\F_3$ of odd degree. If the degree is even, such an equation will in general (as expected) depend on the f\/ield~$\F_q$.

Since we are considering here the number of $\F_{3^n}$-isomorphism classes of elliptic curves with $j$-invariant $0$, i.e., of supersingular elliptic curves over $\F_{3^n}$, the proposition shows there are $4$ supersingular curves when $n$ is odd and $6$ such curves when $n$ is even. This is of course well known; it is consistent with the tables presented in~\cite{Schoof}.

We now more generally consider the case that $E/K$ is an elliptic curve def\/ined over a f\/ield~$K$ with $\charact(K)=3$ such that $\#\Aut_{\overbar{K}}(E)=12$. This means (compare \cite[Appendix~A]{silverman09}) that $j(E)=0$ and $E$ is given by an equation $y^2=x^3+ax+b$, where $a,b\in K$. Thus, there exists an isomorphism $\psi\colon E\to E'$, where $E'\colon y^2=x^3-x$. We are interested in the possibilities for the f\/ield extension where the isomorphism is def\/ined. By \cite[Appendix~A, Proposition~1.2]{silverman09}, we have $\psi(x,y)=(u^2x+r,u^3y)$, where $u^4=-\frac1a$ and $r^3+ar+b=0$. Thus, we see that the degree of the f\/ield extension depends on the existence of a $K$-rational $2$-torsion point on~$E$.

In the case that $E[2](K)$ is trivial, we have $b\neq 0$. As $\psi^{-1}(x,y)=(v^2x+w,v^3y)$ where $v=u^{-1}$ and $w^3-w-v^6b=0$, both $\psi$ and $\psi^{-1}$ are def\/ined over the Artin--Schreier extension of $K(u)$ def\/ined by $w^3-w-v^6b=0$.

In the case that $E[2](K)$ is non-trivial, we may assume $b=0$ and any such isomorphism is def\/ined over~$K(u)$.

The f\/ield $K(u)$ depends in both cases only on $a$ and is a degree four extension if~$a$ is not a square in~$K$.

To complete the picture in characteristic three, note that any elliptic curve $E/K$ in charac\-te\-ris\-tic~$3$ with $j(E)\neq 0$ satisf\/ies $\Aut_{\overbar{K}}(E)=\pm 1$ and therefore $H^1(\absGal,\Aut_{\overbar{K}}(E))\cong K^\times/{K^\times}^2$. In particular, summarizing most of the discussion above for the special case of a f\/inite f\/ield, one obtains the following.
\begin{Proposition}\label{charthree} Let $q=3^n$ and suppose $E/\F_q$ is an elliptic curve. Then
\begin{gather*}
\#\Twist(E/\F_q)=
\begin{cases}
2 & \text{if} \ j(E)\neq 0,\\
4 & \text{if} \ j(E)=0 \ \text{and} \ n \ \text{is odd},\\
6 & \text{if} \ j(E)=0 \ \text{and} \ n \ \text{is even}.
\end{cases}
\end{gather*}
\end{Proposition}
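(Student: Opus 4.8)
The plan is to split the argument according to the $j$-invariant of $E$, exploiting the two very different descriptions of $\Aut_{\overbar{K}}(E)$ in characteristic three that were recorded above. Before doing so I would stress that $\Twist(E/\F_q)$ is, by definition, the set of $\F_q$-isomorphism classes of elliptic curves over $\F_q$ having $j$-invariant equal to $j(E)$; hence this set, and in particular its cardinality, depends only on the value $j(E)$ and not on the chosen model. This observation lets both cases reduce to a computation attached to a single $j$-invariant.

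For the case $j(E)=0$ I would simply invoke Proposition~\ref{2.1}. The curve $E'\colon y^2=x^3-x$ has $j=0$, is supersingular with nonabelian automorphism group $\Aut_{\overbar{\F_q}}(E')$ of order twelve, and its twists over $\F_q=\F_{3^n}$ were enumerated there by counting Frobenius conjugacy classes: four such classes when $n$ is odd and six when $n$ is even. By the independence of $\Twist$ from the model, the same counts hold for every $E/\F_q$ with $j(E)=0$, which yields the second and third lines of the formula.

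For the case $j(E)\neq 0$ I would use the fact recorded just before the proposition that $\Aut_{\overbar{K}}(E)=\{\pm 1\}$, together with the resulting identification $H^1(\absGal,\Aut_{\overbar{K}}(E))\cong K^\times/{K^\times}^2$. Specializing to $K=\F_q$, it remains only to check that $\F_q^\times/(\F_q^\times)^2$ has exactly two elements. This holds because $q=3^n$ is odd, so $\F_q^\times$ is cyclic of even order $q-1$ and its subgroup of squares has index two; thus $\#\Twist(E/\F_q)=2$, giving the first line.

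The argument is in essence a bookkeeping exercise once Proposition~\ref{2.1} and the cohomological identification are in hand, so I do not anticipate a serious obstacle. The single point I would state carefully is the reduction of the $j(E)=0$ case to the explicit curve $y^2=x^3-x$: one must note that in characteristic three every $j=0$ curve is supersingular with the same nonabelian order-twelve automorphism group and identical Frobenius action, and that the number of twists is an invariant of the $j$-invariant alone, so that the enumeration of Proposition~\ref{2.1} transfers verbatim.
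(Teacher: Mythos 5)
Your proposal is correct and follows essentially the same route as the paper, which states Proposition~\ref{charthree} as a summary of the preceding discussion: the $j(E)=0$ count comes from the Frobenius conjugacy classes enumerated in Proposition~\ref{2.1}, and the $j(E)\neq 0$ count from $\Aut_{\overbar{K}}(E)=\{\pm 1\}$ together with $H^1\big(\absGal,\{\pm 1\}\big)\cong \F_q^\times/\big(\F_q^\times\big)^2$, which has order two since $q$ is odd. Your explicit remark that $\#\Twist(E/\F_q)$ depends only on $j(E)$ (so the enumeration for $y^2=x^3-x$ transfers to any $j=0$ curve) is a point the paper handles implicitly via the isomorphism $\psi\colon E\to E'$ discussed before the proposition, and making it explicit is a small improvement rather than a deviation.
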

\section{Twists in characteristic two} \label{section3}
In order to describe twists in characteristic two, we start by considering the central example of a~supersingular elliptic curve over the f\/ield with two elements. As in the case of characteristic three, this is done by computing the Frobenius conjugacy classes in all possible cases for the action of $\absGal$ on $\Aut_{\overbar{K}}(E)$. After this description we turn to isomorphisms between an arbitrary elliptic curve over a f\/ield with characteristic two and this particular example. Just is was done for characteristic three, the example is formulated as a proposition, as follows.

\begin{Proposition}\label{3.1} The elliptic curve
\begin{gather*}
E/\mathbb{F}_2\colon \ y^2+y=x^3
\end{gather*}
has $j(E)=0$ and it has exactly $24$ automorphisms. These are described as
\begin{gather*}
\map{\Phi_{u,r,t}}{E}{E,}{(x,y)}{\big(u^2x+r,y+u^2r^2x+t\big),}
\end{gather*}
where $u\in \mathbb{F}^*_4$, $r\in \mathbb{F}_4$ and $t^2+t+r^3=0$. The action of the Galois group $\absGal[\F_q]$ on $\Aut_{\overbar{\mathbb{F}_q}}(E)$ is trivial in case $n$ is even, and nontrivial if $n$ is odd.

In case $n$ is odd, there are exactly three Frobenius conjugacy classes $C_{u,r,t}$ in $\Aut_{\overbar{\mathbb{F}}_q}(E)$, namely the conjugacy class $C_{1,0,0}$ containing the identity, the class $C_{1,\omega,\omega}$ containing $\Phi_{1,\omega,\omega}$, and the class $C_{1,\omega,\omega^2}$
containing $\Phi_{1,\omega,\omega^2}$. Here $\omega\in\F_4$ is a primitive $3$rd root of unity. The two Frobenius conjugacy classes corresponding to
non-trivial twists of $E$ over $\F_q$ yield twists of $E$ over $\F_q$ which are isomorphic to $E$ over a degree eight extension of $\F_q$.

In case $n$ is even, the Frobenius conjugacy classes coincide with the usual conjugacy classes in $\Aut_{\overbar{\F_q}}(E)$, which are $($with $C_{u,r,t}$ denoting the conjugacy class containing $\Phi_{u,r,t}$ and $\omega\in\F_4$ a~chosen primitive $3$rd root of unity$)$
\begin{gather*}
C_{1,0,0},\
 C_{1,0,1},\
 C_{\omega^2,0,1},\
 C_{\omega,0,1},\
 C_{\omega,0,0},\
 C_{\omega^2,0,0},\
 C_{1,1,\omega}.
\end{gather*}
The twists of $E/\F_q$ corresponding to these conjugacy classes are isomorphic to $E$ over an extension of $\F_q$ of degree $1$, $2$, $6$, $6$, $3$, $3$, $4$, respectively.

In particular $E/\F_{2^n}$ has two non-trivial twists if $n$ is odd, and six non-trivial twists in case~$n$ is even.
\end{Proposition}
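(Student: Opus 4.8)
The plan is to mirror the proof of Proposition~\ref{2.1}: dispose of the elementary assertions, pin down the Galois action on an explicit model of $\Aut_{\overbar{\F_q}}(E)$, and then extract the twists from the Frobenius conjugacy classes through the correspondence of \cite{MeagherTop10}. The statements $j(E)=0$ and $\#\Aut_{\overbar{\F_2}}(E)=24$ come from \cite[Appendix~A, Proposition~1.2]{silverman09}. To confirm the explicit description, substitute $\big(u^{2}x+r,\,y+u^{2}r^{2}x+t\big)$ into $y^{2}+y=x^{3}$; expanding in characteristic two and using $u^{3}=1$ and $r^{4}=r$ shows the equation is preserved exactly when $t^{2}+t+r^{3}=0$, and the admissible triples number $3\cdot4\cdot2=24$. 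The decisive elementary remark, used throughout, is that each solution $t$ of $t^{2}+t+r^{3}=0$ already lies in $\F_4$ (namely $t\in\{0,1\}$ if $r=0$, and $t$ a primitive cube root of unity if $r\neq0$); hence all $24$ automorphisms are defined over $\F_4$.

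It follows that the action of $\absGal[\F_q]$ on $\Aut_{\overbar{\F_q}}(E)$ factors through $\Gal(\F_4/\F_2)$, so $\Frob\colon x\mapsto x^{q}$ acts trivially precisely when $\F_4\subseteq\F_q$, i.e.\ when $n$ is even. For odd $n$ one has $q\equiv2\pmod3$, so $\Frob$ induces the nontrivial element of $\Gal(\F_4/\F_2)$ and ${}^{\Frob}\Phi_{u,r,t}=\Phi_{u^{q},r^{q},t^{q}}=\Phi_{u^{2},r^{2},t^{2}}$. This settles the dichotomy for the Galois action.

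The computational engine is the group law, which a direct calculation yields as
\begin{gather*}
\Phi_{u,r,t}\circ\Phi_{u',r',t'}=\Phi_{uu',\,u^{2}r'+r,\,u^{2}r^{2}r'+t+t'},\qquad
\Phi_{u,r,t}^{-1}=\Phi_{u^{2},\,ru,\,t^{2}}.
\end{gather*}
From these formulas the group is identified with $\SL{2}{\F_3}$: the locus $u=1$ is a quaternion group of order eight (the unique element of order two being $\Phi_{1,0,1}$), on which the cyclic group generated by $\Phi_{\omega,0,0}$ acts by cyclically permuting the three nonzero values of $r$. When $n$ is even the Galois action is trivial, so Frobenius conjugacy classes are ordinary conjugacy classes; one then checks that $\SL{2}{\F_3}$ splits into the seven listed classes, whose representatives have orders $1,2,6,6,3,3,4$. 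Since the corresponding cocycle is $\Frob\mapsto\Phi_{u,r,t}$ with trivial action, the associated isomorphism $\psi$ is defined over $\F_{q^{m}}$ for the least $m$ with ${}^{\Frob^{m}}\psi=\psi$, equivalently $\Phi_{u,r,t}^{\,m}=\id$; thus $m$ is the order of the representative, giving the degrees $1,2,6,6,3,3,4$ and six nontrivial twists.

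For odd $n$ the relevant relation is twisted conjugation $\Phi_{u',r',t'}^{-1}\circ\Phi_{u,r,t}\circ\Phi_{u'^{2},r'^{2},t'^{2}}$, and working this out with the formulas above collapses the classes to the three listed, $C_{1,0,0}$, $C_{1,\omega,\omega}$ and $C_{1,\omega,\omega^{2}}$, whence two nontrivial twists. For the degree, note that $\psi$ is defined over $\F_{q^{m}}$ for the least $m$ with ${}^{\Frob^{m-1}}\xi_{\Frob}\circ\cdots\circ{}^{\Frob}\xi_{\Frob}\circ\xi_{\Frob}=\id$, where $\xi_{\Frob}$ is the cocycle value; taking $\xi_{\Frob}=\Phi_{1,\omega,\omega}$ and using ${}^{\Frob}\Phi_{1,\omega,\omega}=\Phi_{1,\omega^{2},\omega^{2}}$ one computes $\xi_{\Frob^{2}}=\Phi_{1,1,\omega}$, an element of order four, so that $m=8$, and $C_{1,\omega,\omega^{2}}$ is handled identically. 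The main obstacle is exactly this interplay between noncommutativity and the Frobenius twist for odd $n$: because $\SL{2}{\F_3}$ is nonabelian and the Frobenius action is interleaved into the cocycle product, one cannot read off degrees from element orders as in the even case but must propagate the coordinates $(u,r,t)$ through successive powers; exhibiting an explicit isomorphism $\psi$, as in the sample computation of Proposition~\ref{2.1}, is the cleanest route to confirming $m=8$.
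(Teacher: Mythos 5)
Your proposal is correct and follows essentially the same route as the paper's proof: verify the elementary statements via \cite[Appendix~A]{silverman09}, note that all $24$ automorphisms are defined over $\F_4$ (so the Galois action is trivial precisely for even $n$, with ${}^{\Frob}\Phi_{u,r,t}=\Phi_{u^2,r^2,t^2}$ for odd $n$), compute the (Frobenius) conjugacy classes by direct calculation with the composition law, and deduce the field-of-isomorphism degrees from element orders when $n$ is even and from the iterated cocycle values $\xi_{\Frob^j}$ when $n$ is odd, where your computation $\xi_{\Frob^2}=\Phi_{1,1,\omega}$ of order four matches the paper's assertion that the cocycle value ${}^{(\id+\Frob+\dots+\Frob^{j-1})}\Phi$ is nontrivial for $j\leq 7$ and trivial for $j=8$. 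Your explicit formulas $\Phi_{u,r,t}\circ\Phi_{u',r',t'}=\Phi_{uu',\,u^2r'+r,\,u^2r^2r'+t+t'}$, $\Phi_{u,r,t}^{-1}=\Phi_{u^2,ur,t^2}$ and the identification of the group as $\SL{2}{\F_3}\cong Q_8\rtimes C_3$ are a convenient repackaging of the paper's coordinate computations rather than a different method.
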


\begin{proof}The statements about $j(E)$ and $\Aut(E)$ are immediate; compare \cite[Appendix~A]{silverman09}.

In any automorphism $\Phi_{u,r,t}$ one has $r\in \mathbb{F}_4$ hence $r^3=1$ if $r\neq0$ and $r^3=0$ for $r=0$. So the equality $t^2+t+r^3$ shows $t\in\F_2$ for $r=0$ and $t\in\F_4$ for $r\neq 0$. Therefore all automorphisms are def\/ined over~$\F_4$.

To obtain the Frobenius conjugacy classes we f\/irst assume $n$ is odd, and we write $C_{u,r,t}$ for the Frobenius conjugacy class containing
$\Phi_{u,r,t}$. Let $\omega\in\F_4$ be a f\/ixed primitive $3$rd root of unity. A direct calculation shows
\begin{gather*}
C_{1,0,0}=\big\{ \Phi_{u,r,t}^{-1}\Phi_{1,0,0}\Phi_{u^2,r^2,t^2}\,| \,u\in\F_4^*,\,r\in\F_4,\, t^2+t+r^3=0\big\} \\
\hphantom{C_{1,0,0}}{} = \big\{\Phi_{u^2,ur^2+r,ur}\,|\,u\in\F_4^*,\,r\in\F_4\big\},
\end{gather*}
so $C_{1,0,0}$ consists of
\begin{gather*}
\big\{\Phi_{1,0,0}, \Phi_{1,0,1}, \Phi_{1,1,\omega}, \Phi_{1,1,\omega^2},
\Phi_{\omega^2,0,0}, \Phi_{\omega^2,\omega^2,\omega}, \\
\hphantom{\big\{}{}
 \Phi_{\omega^2,\omega^2,\omega^2}, \Phi_{\omega^2,0,1},
\Phi_{\omega,0,0}, \Phi_{\omega,\omega,\omega^2}, \Phi_{\omega,0,1}, \Phi_{\omega,\omega,\omega} \big\}.
\end{gather*}

The other two Frobenius conjugacy classes are given by
\begin{gather*}
C_{1,\omega,\omega}=\big\{\Phi_{1,\omega,\omega},\Phi_{\omega,\omega^2,\omega},\Phi_{\omega,1,\omega^2},
\Phi_{\omega,\omega,\omega},\Phi_{\omega^2,1,\omega},\Phi_{1,\omega^2,\omega^2}\big\},\\
 C_{1,\omega,\omega^2}=\big\{\Phi_{1,\omega,\omega^2},\Phi_{\omega,\omega^2,\omega^2},\Phi_{\omega^2,1,\omega^2},
 \Phi_{\omega^2,\omega,\omega},\Phi_{\omega,1,\omega},\Phi_{1,\omega,\omega^2}\big\}.
\end{gather*}

To see that a twist of $E/\F_q$ corresponding to one of the latter two Frobenius conjugacy classes is indeed isomorphic to $E$ over $\F_{q^8}$ and
not over a smaller extension of $\F_q$, consider a cocycle def\/ined by $\Frob\mapsto \Phi$ (with $\Phi$ in one of the given classes $C_{u,r,s}$). Using the cocycle condition one f\/inds that the cocycle sends $\Frob^j$ (for $j\geq 1$) to ${}^{(\id+\Frob+\dots+\Frob^{j-1})}\Phi$. This is a nontrivial automorphism for $j\leq 7$ and the trivial one for $j=8$. The assertion about the twists follows.

We now consider the case $n$ is even. The conjugacy classes in $\Aut(E)$ are
\begin{gather*}
	C_{1,0,0}=\{\Phi_{1,0,0}\},\\
 C_{1,0,1}=\{\Phi_{1,0,1}\},\\
 C_{\omega^2,0,1}=\big\{\Phi_{\omega^2,0,1},\Phi_{\omega^2,1,\omega},\Phi_{\omega^2,\omega^2,\omega},\Phi_{\omega^2,\omega,\omega}\big\},\\
 C_{\omega,0,1}=\big\{\Phi_{\omega,0,1},\Phi_{\omega,1,\omega^2},\Phi_{\omega,\omega^2,\omega^2},\Phi_{\omega,\omega,\omega^2}\big\},\\
 C_{\omega,0,0}=\big\{\Phi_{\omega,0,0},\Phi_{\omega,1,\omega},\Phi_{\omega,\omega^2,\omega},\Phi_{\omega,\omega,\omega}\big\},\\
 C_{\omega^2,0,0}=\big\{\Phi_{\omega^2,0,0},\Phi_{\omega^2,1,\omega^2},\Phi_{\omega^2,\omega^2,\omega^2},\Phi_{\omega^2,\omega,\omega^2}\big\},\\
 C_{1,1,\omega}=\big\{\Phi_{1,1,\omega},\Phi_{1,1,\omega^2},\Phi_{1,\omega,\omega},
 \Phi_{1,\omega,\omega^2},\Phi_{1,\omega^2,\omega},\Phi_{1,\omega^2,\omega^2}\big\}.
\end{gather*}
So indeed there are exactly $6$ non-trivial twists of $E/\F_q$ in this case. The statement about the extension of $\F_q$ over which they
will be isomorphic to $E$ is shown exactly as the analogous statement for the odd $n$ case (in fact in the present situation it simply refers to the order of the elements in a certain conjugacy class).
\end{proof}

We present some comments regarding the argument above. First take $n=1$, so $q=2^n=2$. Since $-1=\Phi_{1,0,1}$ is in the same Frobenius conjugacy class as the identity, the elliptic curve~$E/\mathbb{F}_2$ has no non-trivial quadratic twist. Let us now consider the cubic twists of~$E$. Again we can see that the automorphisms of order $3$ are in the same conjugacy class of the identity and thus, $E/\F_2$~has no non-trivial cubic twists (and as the proposition states, any non-trivial twist of~$E/\F_2$ will only be isomorphic to~$E$ over extensions of~$\F_2$ of degree a multiple of~$8$).

Explicit equations for the two non-trivial twists of $E/\F_q$, in the case $q=2^n$ with $n$ odd, are as follows. Let
\begin{gather*} E_1\colon \ y^2+y=x^3+x
\end{gather*}
and
\begin{gather*}E_2\colon \ y^2+y=x^3+x+1.\end{gather*}
These elliptic curves indeed satisfy $j(E_1)=j(E_2)=j(E)=0$. Moreover counting points over~$\F_{2^n}$ (compare \cite[Section~V.2]{silverman09}) one f\/inds
$\#E(\F_{2^n})$ and $\#E_1(\F_{2^n})$ and $\#E_2(\F_{2^n})$ are three distinct numbers whenever $n$ is odd. So indeed the curves $E_j$ are distinct and nontrivial twists of~$E/\F_q$. Of course the same conclusion can be obtained by exhibiting an explicit isomorphism $\psi\colon E\to E_j$ and then determining the Frobenius conjugacy class containing ${}^{\Frob}\psi^{-1}\circ\psi$.

We will not try to present equations for all non-trivial twists of $E/\F_q$ in the case $q=2^n$ with~$n$ even. They depend on $q$. Instead we treat one example.

Let $q=4$. In this case ${\rm Fr}\mapsto -1=\Phi_{1,0,1}$ def\/ines a non-trivial cocycle class. The corresponding twist is given by
\begin{gather*}
\Etw\colon \ y^2+y=x^3+\omega
\end{gather*}
(with $\omega$ a primitive $3$rd root of unity), since
\begin{gather*}
\psi\colon \ (x,y)\mapsto (x,y+\tau),
\end{gather*}
with $\tau \in \mathbb{F}_4$ satisfying $\tau^2+\tau+\omega=0$, def\/ines an isomorphism $\psi\colon E\rightarrow \Etw$, and
\begin{gather*}
\big({}^{\rm Fr}\psi\big)^{-1}\circ \psi=-1.
\end{gather*}

Thus, $E/\F_4$ has a non-trivial quadratic twist.

Let $K$ be an f\/ield with $\charact(K)=2$ and consider the elliptic curves $E\colon y^2+ay=x^3+bx+c$ with $a\neq0$ and $E'\colon y^2+y=x^3$. Since $j(E)=0=j(E')$ these elliptic curves are isomorphic and, by Silverman \cite[Appendix A, Proposition~1.2]{silverman09}, for an isomorphism $\psi\colon E\to E'$ we have $\psi(x,y)=(u^2x+s^2,ay+u^2s+t)$, where $u^3=a$, $s^4+as+b=0$ and $t^2+at+s^6+bs^2+c=0$. Moreover from the information presented in Proposition~\ref{3.1} it follows that in case~$K$ is a f\/inite f\/ield, such $u$, $s$, $t$ exist in an extension of degree at most~$8$ resp.~$6$, depending on the action of the Galois group on the automorphism group of~$E$.

Again, we summarize the main results given here for the case of a f\/inite f\/ield, as follows. Here as before a crucial remark is that for~$E/K$ an elliptic curve in characteristic~$2$, the automorphism group over the separable closure is $\pm 1$ unless $j(E)=0$.

\begin{Proposition}\label{chartwo}
Let $q=2^n$ and suppose $E/\F_q$ is an elliptic curve. Then
\begin{gather*}
\#\Twist(E/\F_q)=
\begin{cases}
2 & \text{if} \ j(E)\neq 0,\\
3 & \text{if} \ j(E)=0 \ \text{and} \ n \ \text{is odd},\\
7 & \text{if} \ j(E)=0 \ \text{and} \ n \ \text{is even}.
\end{cases}
\end{gather*}
\end{Proposition}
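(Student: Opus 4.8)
The plan is to treat this as a corollary of the Frobenius conjugacy class description, splitting according to whether $j(E)$ vanishes. Recall from \cite{MeagherTop10} that over a finite field $\#\Twist(E/\F_q)$ equals the number of Frobenius conjugacy classes in $\Aut_{\overbar{\F}_q}(E)$; this is the mechanism I would use throughout, exactly as in Proposition~\ref{3.1}.

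First I would dispose of the case $j(E)\neq 0$. By the crucial remark recalled just above, here $\Aut_{\overbar{\F}_q}(E)=\{\pm 1\}\cong\Z/2\Z$, and the nontrivial automorphism $-1$ (the negation morphism) is defined by rational functions with coefficients in the prime field $\F_2$, so $\Frob$ acts trivially on the automorphism group. For a trivial Frobenius action the Frobenius conjugacy class of $\tau$ reduces to the ordinary conjugacy class $\{\sigma^{-1}\tau\sigma\}$, which in an abelian group is the singleton $\{\tau\}$. Hence the two elements $1$ and $-1$ give two distinct classes, so $\#\Twist(E/\F_q)=2$ independently of the parity of $n$. Equivalently, with trivial action $H^1\big(\absGal[\F_q],\Z/2\Z\big)\cong\Hom_{\mathrm{cont}}(\hat\Z,\Z/2\Z)\cong\Z/2\Z$, again of order two.

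For $j(E)=0$ I would reduce to the central example $E_0\colon y^2+y=x^3$ of Proposition~\ref{3.1}. Every elliptic curve over $\F_q$ with $j$-invariant $0$ is isomorphic over $\overbar{\F}_q$ to $E_0$, and $\Twist(E/\F_q)$ depends, by its very definition, only on the $\overbar{\F}_q$-isomorphism class of $E$; therefore $\Twist(E/\F_q)=\Twist(E_0/\F_q)$ as sets, and in particular the two counts agree. Proposition~\ref{3.1} already computed the Frobenius conjugacy classes of $E_0$ over $\F_q$ explicitly: three classes when $n$ is odd and seven when $n$ is even. This yields the remaining two lines of the table.

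The point worth flagging is that no genuinely new computation is needed beyond the easy $j(E)\neq 0$ case, since the substance for $j(E)=0$ is entirely contained in Proposition~\ref{3.1}. The only subtlety to get right is the reduction step: the number of twists must be insensitive to the particular model chosen within a fixed $\overbar{\F}_q$-isomorphism class, and hence to the precise Galois action on the automorphism group, which for a general $j=0$ model need not coincide with that of $E_0$. I would justify this formally rather than by comparing the two Galois actions, observing that both $H^1$ sets are in bijection with the single set of $\overbar{\F}_q$-forms of $E$, so their cardinalities necessarily match.
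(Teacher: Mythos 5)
Your proposal is correct and follows essentially the same route as the paper, which presents Proposition~\ref{chartwo} as a summary: the case $j(E)\neq 0$ via the remark that $\Aut_{\overbar{\F}_q}(E)=\pm 1$ with trivial Galois action, and the case $j(E)=0$ via the Frobenius conjugacy class counts (three for $n$ odd, seven for $n$ even) already established in Proposition~\ref{3.1}, after identifying any $j=0$ curve with the central example $y^2+y=x^3$ over $\overbar{\F}_q$. Your explicit justification that $\Twist(E/\F_q)=\Twist(E_0/\F_q)$ as sets of forms, independent of the chosen base point in the $\overbar{\F}_q$-isomorphism class, is a point the paper leaves implicit but is entirely sound.
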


\section{Capitulation of quadratic twists}\label{section4}
Let $K$ be a f\/ield and $\overbar{K}$ a~separable closure of $K$. Given an elliptic curve $E/K$, the inclusion $\lmr\subset \Aut_{\overbar{K}}(E)$ induces a map
\begin{gather*}
H^1\big(\absGal, \lmr\big)\longrightarrow H^1\big(\absGal, \Aut_{\overbar{K}}(E)\big).
\end{gather*}
The set of quadratic twists of $E$, i.e.,
\begin{gather*}
QT(E)=\big\{\Etw/K \, | \, \exists\, L/K \text{ with } [L:K]=2 \text{ such that } \Etw\cong _{L}E \big\}/_{\cong_K}
\end{gather*}
is a subset of $\Twist(E/K)$; this subset of quadratic twists corresponds to the image of \linebreak $H^1\big(\absGal, \lmr\big)$ in $H^1\big(\absGal, \Aut_{\overbar{K}}(E)\big)$ under the map just given. Here we consider the question whether $\Etw\in QT(E)$ can be isomorphic to $E$ over the ground f\/ield. In other words, when does $\Etw$ correspond to the trivial element in $H^1\big(\absGal, \Aut_{\overbar{K}}(E)\big)$, under the assumption that it comes from a non-trivial element in the group $H^1\big(\absGal, \lmr\big)=\operatorname{Hom}\big(\absGal, \Z/2\Z\big)$.

This question is analogous to a similar question in algebraic number theory and in function f\/ield arithmetic, namely the so-called capitulation (or principalization) problem for ideals, see, e.g., \cite{Bond,Bosca,LiHu,SW}.

Here is a result concerning capitulation of quadratic twists.
\begin{Proposition}\label{4.1} Let $E/K$ be an elliptic curve such that $\Aut_{\overbar{K}}(E)$ is abelian. Then the map
\begin{gather*}
i\colon \ H^1\big(\absGal, \lmr\big) \longrightarrow H^1\big(\absGal, \Aut_{\overbar{K}}(E)\big)
\end{gather*}
is injective except in the case when $\charact(K) \not \in \{2, 3 \}$, $j(E)=12^3$ and $\absGal$ acts non-trivially on~$\Aut_{\overbar{K}}(E)$.
\end{Proposition}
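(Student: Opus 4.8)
The plan is to reduce the whole statement to the long exact cohomology sequence attached to the inclusion $\langle -1\rangle \hookrightarrow \Aut_{\overbar{K}}(E)$. First I would recall, using the description of the automorphism group invoked earlier (Silverman's Appendix~A), which abelian possibilities can occur. When $\charact(K)\in\{2,3\}$ the abelian case forces $\Aut_{\overbar{K}}(E)=\langle -1\rangle\cong\Z/2\Z$, while for $\charact(K)\notin\{2,3\}$ one has $\Aut_{\overbar{K}}(E)\cong\mu_n$ as $\absGal$-modules with $n\in\{2,4,6\}$, where $n=4$ corresponds to $j(E)=12^3$ and $n=6$ to $j(E)=0$. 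In every case where $\Aut_{\overbar{K}}(E)=\langle -1\rangle$ (that is, $n=2$, or $\charact(K)\in\{2,3\}$ with $j(E)\neq 0$) the map $i$ is the identity and hence trivially injective. This disposes of all characteristics $2$ and $3$ as well as the case $n=2$, so only $\mu_4$ and $\mu_6$ in characteristic different from $2$ and $3$ remain.

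For these two cases I would exploit the short exact sequence of $\absGal$-modules
\[
1 \longrightarrow \langle -1\rangle \longrightarrow \mu_n \longrightarrow \mu_n/\langle -1\rangle \longrightarrow 1,
\]
whose associated long exact sequence identifies the kernel of $i$ with the image of the connecting homomorphism, so that $\ker i=\operatorname{coker}\bigl(H^0(\absGal,\mu_n)\to H^0(\absGal,\mu_n/\langle -1\rangle)\bigr)$. The case $n=6$ is then immediate: since $\mu_6\cong\mu_2\times\mu_3$ as $\absGal$-modules, $\langle -1\rangle=\mu_2$ is a direct summand, so $\mu_6/\langle -1\rangle\cong\mu_3$ and the projection stays surjective on invariants, forcing the cokernel to vanish. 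Thus $i$ is injective for $n=6$ no matter how $\absGal$ acts.

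The decisive case is $n=4$. The crucial observation here is that $\mu_4/\langle -1\rangle\cong\Z/2\Z$ carries the \emph{trivial} Galois action, because any $\sigma\in\absGal$ sends $\zeta_4$ to $\zeta_4^{\pm 1}$ and these two elements agree modulo $\langle -1\rangle$. Hence $H^0(\absGal,\mu_4/\langle -1\rangle)=\Z/2\Z$ always, whereas $H^0(\absGal,\mu_4)=\mu_4(K)$ equals all of $\mu_4$ when $\zeta_4\in K$ (trivial action) and equals $\langle -1\rangle$ when $\zeta_4\notin K$ (nontrivial action). In the trivial-action case the comparison map is surjective and $\ker i=0$; in the nontrivial-action case its image is trivial, so $\ker i\cong\Z/2\Z$ and $i$ fails to be injective. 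Collecting these computations yields injectivity in every situation except $\charact(K)\notin\{2,3\}$, $j(E)=12^3$ and nontrivial Galois action, which is exactly the asserted exceptional case. I expect the only genuine subtlety to be verifying the triviality of the $\absGal$-action on $\mu_4/\langle -1\rangle$ and tracking $H^0(\absGal,\mu_4)$ in the two action regimes; everything else is formal homological bookkeeping.
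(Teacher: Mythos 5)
Your proposal is correct and follows essentially the same route as the paper: the long exact sequence attached to $1 \to \lmr \to \Aut_{\overbar{K}}(E) \to \faktor{\Aut_{\overbar{K}}(E)}{\lmr} \to 1$, with $\Ker(i)$ detected via surjectivity or not of the map $\pi$ on $H^0$'s, and your $\mu_4$ analysis (trivial action on the quotient, so the nontrivial-action case gives $\#\Ker(i)=2$) matches the paper's exactly. The only difference is cosmetic: for $\mu_6$ the paper counts invariants separately in the trivial and nontrivial action subcases (orders $1\to2\to6\to3$ resp.\ $1\to2\to2\to1$), whereas you dispose of both at once via the Galois-module splitting $\mu_6\cong\mu_2\times\mu_3$, a small but legitimate streamlining.
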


\begin{proof} We have the following long exact sequence of groups:
\begin{gather*}
\xymatrix@C=0.95pc{
&1 \ar[r] & H^0\big(\absGal, \lmr\big) \ar[r] & H^0\big(\absGal, \Aut_{\overbar{K}}(E)\big) &\\
& & H^0\big(\absGal,\faktor{\Aut_{\overbar{K}}(E)}{\lmr}\big) \ar@{<-} `l [lu] `[r] `[rru]-(7,0)^{\pi} `[ur] \ar[r] & H^1\big(\absGal, \lmr\big) &\\
& & H^1\big(\absGal, \Aut_{\overbar{K}}(E)\big) \ar@{<-} `l [lu] `[r] `[rru]-(8,0)^i `[ur] \ar[r] & H^1\big(\absGal,\faktor{\Aut_{\overbar{K}}(E)}{\lmr}\big). & &
}
\end{gather*}

Note that $H^0\big(\absGal, \lmr\big)=\Z/2\Z$. By \cite[Chapter~III, Corollary~10.2]{silverman09}, we have the following automorphism groups of an elliptic curve:
\begin{enumerate}\itemsep=0pt
\item[1)] $\Aut_{\overbar{K}}(E)=\lmr \cong\Z/2\Z$, when $j(E)\neq0,12^3$;
\item[2)] $\Aut_{\overbar{K}}(E)\cong\mu_4(\overbar K)$, when $j(E)=12^3$ and $\charact(K) \not \in \{2,3\}$;
\item[3)] $\Aut_{\overbar{K}}(E)\cong\mu_6(\overbar K)$, when $j(E)=0$ and $\charact(K)\not \in \{2,3\}$.
\end{enumerate}
We consider each case separately.

1.~Since $\#\Aut_{\overbar{K}}(E)=2$, the Galois group $\absGal$ acts trivially on $\Aut_{\overbar{K}}(E)$. Therefore, $\#H^0\big(\absGal,\faktor{\Aut_{\overbar{K}}(E)}{\langle -1 \rangle}\big)=1$. Hence, the map $i$ is injective. This proves the proposition in this case.

2.~First, suppose $\absGal$ acts trivially on $\Aut_{\overbar{K}}(E)\cong \mu_4=\big\{1,\zeta_4,\zeta_4^2,\zeta_4^3\big\}$. Again, here we see $\#H^0\big(\absGal,\faktor{\Aut_{\overbar{K}}(E)}{\langle -1 \rangle}\big)=2$. Hence, the f\/irst four groups in the long exact sequence presented in the f\/irst lines of this proof have order as indicated in the following diagram
\begin{gather*}
\xymatrix{1 \ar[r] & 2 \ar[r] & 4 \ar[r]^\pi & 2 \ar[r] &.
	}
\end{gather*}

This implies that $\pi$ is surjective; therefore, $i$ is injective. The proposition follows in this case.

Now, suppose $\absGal$ acts non-trivially on $\Aut_{\overbar{K}}(E)$. Thus, there exists an automorphism $\sigma \in \absGal$ such that
\begin{gather*}
\sigma(1)=1,\qquad \sigma(\zeta_4)=\zeta_4^3.
\end{gather*}
Therefore, $\# H^0\big(\absGal,\Aut_{\overbar{K}}(E)\big)=2$. Now, the action of $\sigma$ on
\begin{gather*}
\faktor{\Aut_{\overbar{K}}(E)}{\langle -1 \rangle}\cong\big\{ \big\{1,\zeta_4^2\big\},\big\{\zeta_4,\zeta_4^3\big\} \big\}
\end{gather*}
is
\begin{gather*}
\sigma\big(\big\{1,\zeta_4^2\big\}\big)=\big\{1,\zeta_4^2\big\},\qquad \sigma\big(\big\{\zeta_4,\zeta_4^3\big\}\big)=\big\{\zeta_4,\zeta_4^3\big\}.
\end{gather*}
We conclude that $\#H^0\big(\absGal,\faktor{\Aut_{\overbar{K}}(E)}{\langle -1 \rangle}\big)=2$. Hence, the f\/irst four groups in the long exact sequence presented at the beginning of this proof have order as indicated in the following diagram
\begin{gather*}
\xymatrix{1 \ar[r] & 2 \ar[r] & 2 \ar[r]^\pi & 2 \ar[r] &.
}
\end{gather*}
Thus, $\pi$ is the constant map; therefore, $\#{\Ker}(i)=2$ and $i$ is not injective. The proposition follows in this case.

3. First, if $\absGal$ acts trivially on $\Aut_{\overbar{K}}(E)\cong \mu_6=\langle\zeta_6\rangle$, then we have $\#H^0\big(\!\absGal, \Aut_{\overbar{K}}(E)\big)$ $=6$ and $\#H^0\big(\absGal,\faktor{\Aut_{\overbar{K}}(E)}{\langle -1 \rangle}\big)=3$. The f\/irst four groups in the long exact sequence from the start of this proof therefore have order
\begin{gather*}
\xymatrix{1 \ar[r] & 2 \ar[r] & 6 \ar[r]^\pi & 3 \ar[r] &.
}
\end{gather*}
This implies that $\pi$ is surjective; therefore, because the sequence is exact, $i$ is injective.

Now, suppose $\absGal$ acts non-trivially on $\Aut_{\overbar{K}}(E)$. Let $\sigma \in \absGal$ acts non-trivially on $\Aut_{\overbar{K}}(E)$. Then we have
\begin{gather*}
\sigma(1)=1,\qquad \sigma(\zeta_6)=\zeta_6^5.
\end{gather*}
Thus we get $\# H^0\big(\absGal,\Aut_{\overbar{K}}(E)\big)=2$. The action of $\sigma$ on
\begin{gather*}
\faktor{\Aut_{\overbar{K}}(E)}{\langle -1 \rangle}\cong\big\{ \big\{1,\zeta_6^3\big\}, \big\{\zeta_6,\zeta_6^4\big\}, \big\{\zeta_6^2,\zeta_6^5\big\}\big\}
\end{gather*}
is given by
\begin{gather*}
\sigma\big(\big\{1,\zeta_6^3\big\}\big)=\big\{1,\zeta_6^3\big\},\qquad
\sigma\big(\big\{\zeta_6,\zeta_6^4\big\}\big)=\big\{\zeta_6^2,\zeta_6^5\big\},\qquad
\sigma\big(\big\{\zeta_6^2,\zeta_6^5\big\}\big)=\big\{\zeta_6,\zeta_6^4\big\},
\end{gather*}
implying that $\#H^0\big(\absGal,\faktor{\Aut_{\overbar{K}}(E)}{\langle -1 \rangle}\big)=1$. The f\/irst four groups in the long exact sequence used throughout this argument have orders
\begin{gather*}
\xymatrix{1 \ar[r] & 2 \ar[r] & 2 \ar[r]^\pi & 1 \ar[r] &.}
\end{gather*}
\looseness=1 We conclude that $\pi$ is surjective; hence, $i$ is injective. This completes the proof of the proposi\-tion.
\end{proof}

The condition in Proposition~\ref{4.1} that the automorphism group of $E$ should be abelian, means that one excludes only the cases $j(E)=0$ in ${\charact}(K) \in \left\{2,3\right\}$. We brief\/ly consider these two excluded cases here.

Suppose $\charact(K)=2$ and take $E/K\colon y^2+y=x^3$. As in Section~\ref{section3} one f\/inds that $\absGal$ acts trivially on $\Aut_{\overbar{K}}(E)$ precisely when $\F_4\subset K$. In that case no capitulation of quadratic twists occurs. However, when Galois acts non-trivially on this automorphism group, then as in the case of a f\/inite f\/ield studied in Section~\ref{section3} where we saw that $-1$ and $1$ are in the same Frobenius conjugacy class, capitulation occurs.

Similarly, suppose $\charact(K)=3$ and take $E\colon y^2=x^3-x$. Again, there is capitulation of quadratic twists precisely when the Galois action on the automorphism group is non-trivial, which happens precisely when~$\F_9\not\subset K$.

\begin{Example}\label{example1} Take
\begin{gather*}
E/\Q\colon \ y^2=x^3-x.
\end{gather*}
Then
\begin{gather*}
\Aut_{\overbar{\Q}}(E)= \{\pm 1, \pm \iota \},
\end{gather*}
where $\iota \colon E \rightarrow E$ is def\/ined by $(x,y)\mapsto (-x,iy)$ for a f\/ixed choice of a primitive $4$th root of unity $i \in \overbar{\Q}$.

For $d\in \Q^*$ write
\begin{gather*}
E^{(d)}\colon \ y^2=x^3-d^2x.
\end{gather*}
Then $E^{(d)}$ is a twist of $E/\Q$, since $\psi_d\colon E\rightarrow E^{(d)}$ def\/ined as
\begin{gather*}
\psi_d(x,y)=\big(dx,d\sqrt{d}y\big)
\end{gather*}
is an isomorphism between $E$ and $E^{(d)}$.

If $\sigma \in \absGal[\Q]$, then
\begin{gather*}
\big(^\sigma\psi_d\big)^{-1} \circ \psi_d= \begin{cases}
\hphantom{-}1 & \text{if} \ \sigma\big(\sqrt{d}\big)=\sqrt{d}, \\
-1 & \text{if} \ \sigma\big(\sqrt{d}\big)=-\sqrt{d}.
\end{cases}
\end{gather*}
So $E^{(d)}$ corresponds to the cocycle class of
\begin{gather*}
\sigma \mapsto \frac{\sigma\big(\sqrt{d}\big)}{\sqrt{d}} \in \Aut_{\bar{\Q}}(E).
\end{gather*}

In the case $d=-1$, this cocycle is a coboundary, since
\begin{gather*}
\frac{\sigma\big(\sqrt{-1}\big)}{\sqrt{-1}}=(^\sigma\iota)^{-1}\circ \iota.
\end{gather*}
So $E^{(-1)} \cong E$ over $\Q$, which is, of course, evident from the equation.
\end{Example}

\begin{Example}\label{example2}Take $q$ a power of an odd prime, and
\begin{gather*}
E/\Fq\colon \ y^2=x^3-x.
\end{gather*}
The Galois group $\absGal[\Fq]$ acts non-trivially on $\Aut_{\Fqbar}(E)$ if and only if $-1$ is not a square in $\Fq$. We have
\begin{gather*}
 \sqrt{-1} \not \in \Fq \iff q\equiv 3 \pmod 4.
\end{gather*}
 For $d\in \Fq^{*}$, def\/ine $E^{(d)}/\Fq$ as before. This provides a quadratic twist as in Example~\ref{example1}.

If $d$ is not a square and $q\equiv 1\pmod4$, then $E^{(d)}$ is the (unique) non-trivial quadratic twist of $E/\Fq.$

If $d$ is not a square and $q\equiv 3\pmod4$, then $-d$ is a square. Therefore, we have $E^{(d)}=E^{(-d)}\cong E$ over $\Fq$. So for $q\equiv 3\pmod4$, a non-trivial quadratic twist of $E/\Fq$ does not exist.
\end{Example}

\section{Capitulation of cubic twists}\label{section5}

Let $E/K$ be an elliptic curve such that $\Aut_{\overbar{K}}(E)$ contains a subgroup $C_3$ of order $3$. This implies $j(E)=0$ by \cite[Chapter~III, Corollary~10.2]{silverman09}. Thus, we restrict ourselves in this section to elliptic curves $E$ with $j(E)=0$. Note that in the case of $\charact(K)=2,3$ the group $\Aut_{\overbar{K}}(E)$ is not abelian; thus, the considered exact sequence is an exact sequence of pointed sets.
The non-abelian cohomology needed to describe twists in this situation, is, e.g., described in Serre's books \cite[Chapter~I, Section~5]{SerreGaloisCohom} and \cite[Chapter~XIII]{SerreLocalFields}.

\begin{Proposition}\label{5.1}
Let $E/K$ be an elliptic curve with $j(E)=0$. There is a unique $($and therefore normal$)$ subgroup $C_3\subset{\Aut_{\overbar{K}}(E)}$ of order $3$. The map
\begin{gather*}
	i\colon \ H^1\big(\absGal, C_3\big) \longrightarrow H^1\big(\absGal, \Aut_{\overbar{K}}(E)\big)
\end{gather*}
is injective except possibly when $\charact(K)=2$.
\end{Proposition}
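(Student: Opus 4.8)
The plan is to set up the long exact sequence of pointed sets attached to $1\to C_3\to \Aut_{\overbar{K}}(E)\to \Aut_{\overbar{K}}(E)/C_3\to 1$ (non-abelian cohomology in the sense of Serre) and to analyse it separately according to $\charact(K)$, exactly as in the proof of Proposition~\ref{4.1}. Writing $A:=\Aut_{\overbar{K}}(E)$ and $B:=A/C_3$, I would first record the structure of $A$ for $j(E)=0$: it is $\mu_6(\overbar K)$ when $\charact(K)\notin\{2,3\}$, a non-abelian group of order $12$ when $\charact(K)=3$, and a group of order $24$ when $\charact(K)=2$. The uniqueness (hence normality) of $C_3$ is essentially a Sylow count: for $\charact(K)=3$ an explicit inspection of the $\Phi_{u,r}$ shows there are only two elements of order $3$, so the Sylow $3$-subgroup $C_3$ is unique. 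By contrast, in characteristic $2$ the order-$24$ group $A\cong\SL{2}{\F_3}$ has four subgroups of order $3$; thus $C_3$ is there neither unique nor normal, the sequence above does not even exist, and this is precisely the source of the ``except possibly'' clause.

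For $\charact(K)\notin\{2,3\}$ the argument is clean and gives genuine injectivity. Here $A=\mu_6(\overbar K)\cong\mu_2(\overbar K)\times\mu_3(\overbar K)$ as $\absGal$-modules, the two factors being the characteristic subgroups $\lmr$ and $C_3$; since $\lmr$ carries the trivial action it maps isomorphically onto $B$, so the sequence splits $\absGal$-equivariantly. Consequently $H^1(\absGal,A)=H^1(\absGal,C_3)\oplus H^1(\absGal,\lmr)$ and $i$ is the inclusion of a direct summand, hence injective.

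The real work is in characteristic $3$, where $A$ is non-abelian and $B\cong\Z/4\Z$. From exactness of the pointed-set sequence $H^0(\absGal,A)\to H^0(\absGal,B)\xrightarrow{\delta}H^1(\absGal,C_3)\xrightarrow{i}H^1(\absGal,A)$ one gets $\Ker(i)=\operatorname{Im}(\delta)$, so the first goal is to show that the connecting map $\delta$ is trivial, equivalently that $H^0(\absGal,A)\to H^0(\absGal,B)$ is surjective. This I would prove using the coprimality $\gcd(3,|B|)=1$: for a Galois-fixed $b\in B$ the preimage $p^{-1}(\langle b\rangle)$ is a $\{2,3\}$-group with normal Sylow $3$-subgroup $C_3$, so by Schur--Zassenhaus it has a cyclic $2$-complement lifting $\langle b\rangle$; the point is to select such a complement in a Galois-stable way, which is automatic when $b$ lies in the subgroup of $B$ acting trivially on $C_3$ (the complement is then unique), and otherwise requires tracking the Galois action on the three conjugate complements.

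The step I expect to be the main obstacle is passing from ``$\Ker(i)$ trivial'' to genuine injectivity in this non-abelian case: unlike the abelian situation of Proposition~\ref{4.1}, a map of pointed sets with trivial kernel need not be injective. The standard remedy is Serre's twisting: for each cocycle $a\in Z^1(\absGal,C_3)$ one replaces the sequence by its twist ${}_aA$ and shows the twisted connecting map is again trivial; since $C_3$ is abelian one has ${}_aC_3=C_3$ and ${}_aB=B$, so this reduces to surjectivity of $({}_aA)^{\absGal}\to B^{\absGal}$ for every $a$. I expect the crux of the whole proposition to be exactly this verification across all twists, and the delicate borderline is a field such as $\F_9$, where $\absGal$ acts trivially and the two non-identity elements of $C_3$ are conjugate in $A$ (via an order-$4$ automorphism inverting $C_3$); controlling whether twisted Galois conjugacy keeps the two nontrivial classes of $H^1(\absGal,C_3)$ separated is precisely the heart of the matter.
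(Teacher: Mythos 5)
Your treatment of $\charact(K)\neq 2,3$ is correct and genuinely different from the paper's: you split $\Aut_{\overbar{K}}(E)\cong\mu_2\times\mu_3$ as $\absGal$-modules and realize $i$ as the inclusion of a direct summand, whereas the paper counts orders of the $H^0$-terms in the long exact sequence separately for trivial and non-trivial action ($1\to 3\to 6\to 2$, etc.) and deduces surjectivity of $\pi$. Your splitting argument is cleaner and handles both actions at once. Your diagnosis of characteristic $2$ ($\Aut_{\overbar K}(E)\cong\SL{2}{\F_3}$ with four Sylow $3$-subgroups, so $C_3$ is neither unique nor normal and the sequence does not exist) also matches what the paper tacitly does, namely nothing. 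In characteristic $3$, however, your argument is only a plan in two places. For the kernel computation, where you invoke Schur--Zassenhaus and leave open the ``tracking of the Galois action on the three conjugate complements'', the paper simply enumerates the possible non-trivial actions on $C_3\rtimes C_4$ (trivial on $C_3$; trivial on $C_4$; neither) and in each case reads off the orders of the first four terms to get $\pi$ surjective, hence $\Ker(i)$ trivial. That enumeration is finite and routine; your sketch would have to be carried out to the same effect, and as written it is not.

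The decisive point is the obstacle you flag at the end: it is not merely delicate, it is fatal to the strict reading, and your instinct about $\F_9$ is exactly right. Take $K=\F_9$, so $\absGal[\F_9]$ acts trivially on $\Aut_{\overbar{K}}(E)$. Then $H^1\big(\absGal[\F_9],C_3\big)=\Hom\big(\absGal[\F_9],C_3\big)$ has three elements, but the two non-trivial homomorphisms $\Frob\mapsto\Phi_{1,\pm1}$ are conjugate in $\Aut_{\overbar{K}}(E)$ via the Galois-fixed order-$4$ automorphism $\Phi_{i,0}$, hence become equal in $H^1\big(\absGal[\F_9],\Aut_{\overbar{K}}(E)\big)$; this is precisely the even-$n$ Frobenius conjugacy class $C_{1,1}=\{\Phi_{1,1},\Phi_{1,-1}\}$ of Proposition~\ref{2.1}. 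So $i$ has trivial kernel but is \emph{not} injective as a map of sets, and no twisting argument can prove otherwise: the program you propose for the final step would terminate in a counterexample rather than a proof. The resolution is that ``injective'' in Proposition~\ref{5.1} must be read in the pointed-set sense of trivial kernel --- which is all the capitulation question requires, and is all the paper's own proof ever establishes (it only verifies surjectivity of $\pi$, never separates distinct non-trivial fibers). Once you retreat to kernel-triviality, your twisting step becomes unnecessary, and the only remaining gap in your proposal is completing the Schur--Zassenhaus bookkeeping, or replacing it by the paper's three-case $H^0$ count.
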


\begin{proof} 1. First we consider the case $\charact(K)\neq2,3$. Then $\Aut_{\overbar{K}}(E)$ is cyclic of order $6$, so indeed $C_3$ as desired exists and is unique. If $\absGal$ acts trivially on $\Aut_{\overbar{K}}(E)$ we have
\begin{gather*}
	\#H^0\big(\absGal,\faktor{\Aut_{\overbar{K}}(E)}{\mu_3}\big)=2.
\end{gather*}

In the long exact sequence
\begin{gather*}
\xymatrix@C=0.95pc{
&1 \ar[r] & H^0\big(\absGal, C_3\big) \ar[r] & H^0\big(\absGal, \Aut_{\overbar{K}}(E)\big) &\\
& & H^0\big(\absGal,\faktor{\Aut_{\overbar{K}}(E)}{C_3}\big) \ar@{<-} `l [lu] `[r] `[rru]-(7,0)^{\pi} `[ur] \ar[r] & H^1\big(\absGal, C_3\big) &\\
& & H^1\big(\absGal, \Aut_{\overbar{K}}(E)\big) \ar@{<-} `l [lu] `[r] `[rru]-(8,0)^i `[ur] \ar[r] & H^1\big(\absGal,\faktor{\Aut_{\overbar{K}}(E)}{C_3}\big) & &}
\end{gather*}
the orders of the f\/irst few groups are
\begin{gather*}
\xymatrix{1 \ar[r] & 3 \ar[r] & 6 \ar[r]^\pi & 2 \ar[r] &}
\end{gather*}
 and thus, $\pi$ is surjective which implies $i$ is injective.

If on the other hand $\absGal$ acts non-trivially on $\Aut_{\overbar{K}}(E)$ then since Galois f\/ixes the $-1$-map, any $\sigma\in\absGal$ that acts non-trivially has to interchange the two non-trivial elements of $C_3$. This implies that $\absGal$ acts trivially on $\faktor{\Aut_{\overbar{K}}(E)}{\mu_3}$ and thus, in the long exact sequence presented earlier in this proof $\pi$ is surjective since $\# H^0\big(\absGal,\mu_3\big)=1$ and $\#H^0\big(\absGal,\Aut_{\overbar{K}}(E)\big)=1$. So again one concludes that $i$ is injective.

2. Let $\charact(K)=3$. This implies $\Aut_{\overbar{K}}(E)$ is a semi-direct product $C_3\rtimes C_4$ of cyclic groups of order $3$ and $4$ (see \cite[Appendix A, Example~A.1]{silverman09}). In particular it follows that the automorphism group has a unique subgroup $C_3$ of order $3$. If $\absGal$ acts trivially on $\Aut_{\overbar{K}}(E)$, then $i$ is injective. If $\absGal$ acts non-trivially on $\Aut_{\overbar{K}}(E)$ we will consider several cases.

First we consider the case that $\absGal$ acts trivially on $C_3$. This implies that any non-trivially acting $\sigma\in\absGal$ interchanges the two elements of order $4$ in $C_4$. This implies $\# H^0\big(\absGal,C_3\big)=3$ and $\# H^0\big(\absGal,\Aut_{\overbar{K}}(E)\big)=6$ and 	$\#H^0\big(\absGal,\faktor{\Aut_{\overbar{K}}(E)}{C_3}\big)=2$. This gives us the sequence of orders
\begin{gather*}
\xymatrix{1 \ar[r] & 3 \ar[r] & 6 \ar[r]^\pi & 2 \ar[r] &}
\end{gather*}
from which it follows that $\pi$ is surjective and thus $i$ is injective.

Now consider the case that all elements of $C_4$ are f\/ixed under the action of~$\absGal$. This implies that any $\sigma\in\absGal$ acting non-trivially on $\Aut_{\overbar{K}}(E)$ has to interchange the non-trivial elements of~$C_3$. Therefore, we get
\begin{gather*}
	\# H^0\big(\absGal,C_3\big)=1,\\
	\# H^0\big(\absGal,\Aut_{\overbar{K}}(E)\big)=4,\\
	\# H^0\big(\absGal,\faktor{\Aut_{\overbar{K}}(E)}{C_3}\big)=4.
\end{gather*}
Similar to the previous situation this implies that $i$ is injective.

In the case that neither $C_3$ nor $C_4$ are elementwise f\/ixed under the action of~$\absGal$, we easily get the following sequence of orders
\begin{gather*}
\xymatrix{1 \ar[r] & 1 \ar[r] & 2 \ar[r]^\pi & 1 \ar[r] &}
\end{gather*}
and thus again, $i$ is injective.
\end{proof}

So the result says that capitulation of cubic twists does not occur, except possibly in characteristic $2$. In fact Proposition~\ref{chartwo} implies that it also does not occur over f\/inite f\/ields in characteristic two.

\section{Capitulation of sextic twists}\label{section6}
Let $E/K$ be an elliptic curve such that $\Aut_{\overbar{K}}(E)$ has a normal subgroup of order $6$. This implies $j(E)=0$ and $\charact(K)\neq 2$.
A result analogous to Proposition~\ref{5.1} is the following.
\begin{Proposition}\label{6.1} Suppose $\charact(K)\neq 2$ and let $E/K$ be an elliptic curve with $j(E)=0$. There is a unique $($and therefore normal$)$ subgroup $C_6\subset{\Aut_{\overbar{K}}(E)}$ of order $6$. The map
\begin{gather*}
	i\colon \ H^1\big(\absGal, C_6\big) \longrightarrow H^1\big(\absGal, \Aut_{\overbar{K}}(E)\big)
\end{gather*}
is injective except in the two cases
\begin{enumerate}\itemsep=0pt
\item[$1)$] $\charact(K)=3$ and $\absGal$ acts trivially on $C_6$;
\item[$2)$] $\charact(K)=3$ and the only elements in $\Aut_{\overbar{K}}(E)$ fixed by $\absGal$ are $\pm 1$.
\end{enumerate}
\end{Proposition}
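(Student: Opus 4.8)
The plan is to reduce at once to $\charact(K)=3$ and then to run the long exact cohomology sequence of the normal subgroup $C_6$, taking care that, unlike in the cyclic cases treated in standard texts, $C_6$ is \emph{not} central in $\Aut_{\overbar{K}}(E)$, so that injectivity of $i$ is a statement about all fibres of $i$ and not merely about its kernel. First I would dispose of $\charact(K)\neq2,3$: there $j(E)=0$ forces $\Aut_{\overbar{K}}(E)\cong\mu_6$ by \cite[Chapter~III, Corollary~10.2]{silverman09}, so $C_6$ is the whole group, $i$ is the identity, and the claim is trivial; this also exhibits $C_6$ as the unique order-$6$ subgroup. So assume $\charact(K)=3$. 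As recalled in the proof of Proposition~\ref{5.1}, $\Aut_{\overbar{K}}(E)\cong C_3\rtimes C_4$ is dicyclic of order $12$; its Sylow $3$-subgroup $C_3$ is the unique one (hence normal) and $-1$ is its unique involution, so $C_6:=\langle C_3,-1\rangle$ is cyclic of order $6$ and is the unique subgroup of that order (an order-$6$ subgroup has index $2$, contains the unique $C_3$, and its involution can only be $-1$). Being characteristic it is $\absGal$-stable and normal, $\faktor{\Aut_{\overbar{K}}(E)}{C_6}\cong C_2$, and since any automorphism of the dicyclic group acts trivially on its order-$2$ quotient, $\absGal$ does too.

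Next I would feed this into the same long exact sequence of pointed sets used for Proposition~\ref{5.1}, with $C_3$ replaced by $C_6$; the relevant portion is
\[
H^0\big(\absGal,\Aut_{\overbar{K}}(E)\big)\stackrel{\pi}{\longrightarrow}H^0\big(\absGal,\faktor{\Aut_{\overbar{K}}(E)}{C_6}\big)\stackrel{\delta}{\longrightarrow}H^1\big(\absGal,C_6\big)\stackrel{i}{\longrightarrow}H^1\big(\absGal,\Aut_{\overbar{K}}(E)\big),
\]
where $\faktor{\Aut_{\overbar{K}}(E)}{C_6}\cong C_2$ carries the trivial action. Exactness gives $\Ker(i)=\operatorname{im}(\delta)$, so $\Ker(i)$ is nontrivial precisely when $\pi$ fails to be surjective, i.e.\ when \emph{no} order-$4$ automorphism is $\absGal$-fixed. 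However, because $C_6$ is not central I must control every fibre: by the twisting formalism \cite[Chapter~I, Section~5]{SerreGaloisCohom}, the fibre of $i$ through a class $[c]$ is in bijection with $C_2\big/\operatorname{im}\big(({}_c\Aut_{\overbar{K}}(E))^{\absGal}\to C_2\big)$, so it is a singleton iff the $c$-twisted fixed group still surjects onto $C_2$, i.e.\ iff some order-$4$ automorphism is fixed by the twisted action. Equivalently, since conjugation by any order-$4$ element inverts the $C_3$-part of $C_6$, the map $i$ identifies each class with a ``Galois-twisted inverse'' of itself on the $C_3$-component.

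The case analysis is then organised by the quadratic character describing whether $\absGal$ inverts the order-$3$ automorphisms (this is exactly whether $\absGal$ acts nontrivially on $C_6$) and by whether some order-$4$ automorphism is $\absGal$-fixed. When no order-$4$ automorphism is fixed, $\pi$ is not onto, $\delta\neq0$, and capitulation occurs: this covers exception~(2), where only $\pm1$ is fixed, and the subcase of exception~(1) in which $\absGal$ permutes the order-$4$ automorphisms nontrivially while fixing $C_6$. The remaining, subtler source lies inside exception~(1) as well: when $\absGal$ fixes an order-$4$ automorphism too (e.g.\ acts trivially on all of $\Aut_{\overbar{K}}(E)$) one has $\Ker(i)=\{*\}$, yet conjugation by that order-$4$ automorphism inverts the $C_3$-part, so $i$ identifies each class with its inverse-on-$C_3$ and is not injective whenever $H^1(\absGal,C_3)$ has a nonzero contribution; over a finite field $\Hom(\hat{\Z},C_3)=C_3\neq0$, which is what makes the counts in Proposition~\ref{charthree} drop to $4$ and $6$.

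The main obstacle is precisely the complementary surviving case, $\charact(K)=3$ with $\absGal$ inverting the order-$3$ automorphisms but fixing an order-$4$ automorphism, which is the one the proposition asserts to be injective. Here $\delta=0$, but because of non-centrality I must still rule out the conjugation identification: I would have to show that the comparison of $[\phi]$ with its twisted inverse is already trivial in $H^1(\absGal,C_6)$, and this reduces to the vanishing of $H^1(\absGal,C_3)$ carrying the inversion action. This vanishing does hold over finite fields — for $\hat{\Z}$ with Frobenius acting by $-1$ one has $\mathrm{Frob}-1\equiv1$ on $C_3$, an isomorphism, so the group vanishes and every fibre is a singleton — and establishing it (or isolating the hypotheses under which it persists) is the delicate step that separates this case from the two exceptional ones. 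I expect the bookkeeping of the four Galois actions to be routine once the fibre criterion above is in place; the real content, and the only place where the non-abelian nature of $\Aut_{\overbar{K}}(E)$ genuinely intervenes, is this last cohomological reduction.
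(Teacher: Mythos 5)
Your first two paragraphs are, in substance, the paper's own proof: the same reduction to $\charact(K)=3$, the same identification of the unique $C_6$, and the same long exact sequence of pointed sets; your unified criterion ``$\Ker(i)$ is nontrivial precisely when no order-$4$ automorphism is $\absGal$-fixed'' is exactly what the paper verifies case by case via the order counts $1\to6\to6\to2$, $1\to2\to4\to2$ and $1\to2\to2\to2$. Where you diverge is in refusing the inference ``$\pi$ surjective $\Rightarrow$ $i$ injective'': the paper makes that inference without comment, i.e., it de facto reads ``injective'' as ``trivial kernel'', which is also what the capitulation question framing Sections~4--6 actually asks. Under that reading your argument is already complete and your fibre analysis is superfluous; your (correct) observation that the totally trivial action gives a non-injective $i$ whenever $\Hom\big(\absGal,C_3\big)=K/\wp(K)\neq0$ even contradicts the paper's proof sentence ``acts trivially on $\Aut_{\overbar{K}}(E)$ $\Rightarrow$ $i$ injective'' --- further evidence that only kernels are intended, with exception~(1) tacitly meaning ``trivial on $C_6$ but not on all of $\Aut_{\overbar{K}}(E)$''.

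Under your literal pointed-set reading, however, the ``delicate step'' you isolate is not merely delicate: it fails, so that route cannot close. The vanishing of $H^1\big(\absGal,C_3(\chi)\big)$ (inversion action via a quadratic character $\chi$) does not hold for all perfect fields of characteristic~$3$. Take $K$ the perfect closure of $\F_9(t)$ and $E\colon y^2=x^3-tx$: the order-$3$ automorphisms are $(x,y)\mapsto(x+r,y)$ with $r^2=t$, so $\chi=\chi_t$ is nontrivial, while $\Phi\colon(x,y)\mapsto\big({-x},\sqrt{-1}\,y\big)$ is a $K$-rational order-$4$ automorphism --- your surviving case. With $L=K\big(\sqrt t\big)$, the class of $\sqrt t$ in $L/\wp(L)$ lies in the $(-1)$-eigenspace and is nonzero (a pole/degree argument shows $y^3-y=\sqrt t$ has no solution even in the perfect closure), and inflation--restriction (the group of order~$2$ has no cohomology in $\Z/3\Z$ coefficients) turns it into $0\neq[\rho]\in H^1\big(\absGal,C_3(\chi)\big)$. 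The classes with $C_3$-parts $\rho$ and $-\rho$ are then distinct in $H^1\big(\absGal,C_6\big)$ but conjugate under the Galois-fixed $\Phi$, hence have the same image: $i$ has trivial kernel yet is not injective, so the proposition is false over such $K$ in your reading. Your finite-field computation ($\Frob-1\equiv1$ on $C_3$) is correct, and that is as far as the literal statement extends; for the statement the paper actually proves, stop after your kernel criterion.
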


\begin{proof} In the case $\charact(K)\neq2,3$ we have for $E$ as above that $\Aut_{\overbar{K}}(E)$ is cyclic of order $6$. So the result is trivial in this case.

Now assume $\charact(K)=3$. In this case $\#\Aut_{\overbar{K}}(E)=12$ and this automorphism group contains a unique subgroup $C_6$ of order $6$. It is generated by the unique element of order $2$ and the subgroup $C_3$ of order $3$ in $\Aut_{\overbar{K}}(E)$. In the case that $\absGal$ acts trivially on $\Aut_{\overbar{K}}(E)$ we get once again that $i$ is injective. Thus, we now assume that the Galois action is non-trivial.

First case: the Galois action on $C_3$ is trivial. Then analogously to the case of cubic twists here $\#H^0(\absGal,C_6)=6$ and $\#H^0\big(\absGal,\Aut_{\overbar{K}}(E)\big)=6$. Obviously, $\absGal$ f\/ixes the residue classes modulo $C_6$. Thus the group $H^0\big(\absGal,\faktor{\Aut_{\overbar{K}}(E)}{C_6}\big)$ has order $2$. This gives us the sequence of orders
\begin{gather*}
\xymatrix{1 \ar[r] & 6 \ar[r] & 6 \ar[r]^\pi & 2 \ar[r] &},
\end{gather*}
implying in the same way as in earlier cases that $i$ is not injective, and the map
\begin{gather*}
\pi\colon \ H^0\big(\absGal, \Aut_{\overbar{K}}(E)\big)\to H^0\big(\absGal,\faktor{\Aut_{\overbar{K}}(E)}{H}\big)
\end{gather*} is constant.

Second case: the Galois action f\/ixes the points in a cyclic order $4$ subgroup of automorphisms. Then $\#H^0(\absGal,C_6)=2$ and $\#H^0(\absGal,\Aut_{\overbar{K}}(E))=4$. Furthermore the action on $\faktor{\Aut_{\overbar{K}}(E)}{C_6}$ is trivial, which gives us the sequence of orders
\begin{gather*}
\xymatrix{1 \ar[r] & 2 \ar[r] & 4 \ar[r]^\pi & 2 \ar[r] &}.
\end{gather*}
As before we conclude that $i$ is injective.

Third case: Neither $C_3$ nor an order $4$ subgroup $C_4$ are pointwise f\/ixed under the action of~$\absGal$. Then only $\pm1$ are f\/ixed in $\Aut_{\overbar{K}}(E)$ and $C_6$. Further, we see that the action on the quotient group again is trivial. This implies for the orders in the long exact sequence
\begin{gather*}
\xymatrix{1 \ar[r] & 2 \ar[r] & 2 \ar[r]^\pi & 2 \ar[r] &}.
\end{gather*}
So reasoning as before, $\pi$ is constant and $i$ is not injective. This case concludes the proof in characteristic $3$.
\end{proof}

In fact a slightly dif\/ferent proof of the same result may be obtained by observing that a sextic twist may be regarded as a cubic twist of a~quadratic one. We will not pursue this here.

\section{Other twists}\label{section7}

Although the techniques used in the previous sections require the (cyclic and Galois stable) subgroup used there to be normal, also in the non-normal cases one can draw conclusions.

We restrict ourselves to providing two examples.
\begin{Example} Take $q=3^n$ and consider $E/\F_q$ given by $y^2=x^3-x$. The automorphism $\Phi_{i,0}\colon (x,y)\mapsto (-x,\sqrt{-1}y)$ generates a Galois stable subgroup $H$ of $\Aut_{\overbar{\F_q}}(E)$ of order $4$. Then $\Frob\mapsto \Phi_{i,0}$ def\/ines a cocycle in $H^1(\absGal,H)$ and in $H^1(\absGal, \Aut_{\overbar{\F_q}}(E))$. In Proposition~\ref{2.1} we saw that this corresponds to a non-trivial twist.
\end{Example}

\begin{Example}Similarly we put $q=2^n$ and $E/\F_q$ given by $y^2+y=x^3$. With $\omega\in\overbar{\F_q}$ a~primitive third root of unity, the automorphism $\Phi_{\omega^2,0,1}\colon (x,y)\mapsto (\omega x,y+1)$ generates a Galois stable subgroup $H$ of $\Aut_{\overbar{\F_q}}(E)$
of order $6$, and $\Frob\mapsto \Phi_{\omega^2,0,1}$ def\/ines a cocycle in $H^1(\absGal,H)$ and in $H^1(\absGal, \Aut_{\overbar{\F_q}}(E))$.

Proposition~\ref{3.1} shows that for odd $n$ this results in a trivial twist, and for $n$ even one obtains a non-trivial twist.
\end{Example}

\subsection*{Acknowledgements}
It is a pleasure to thank Joe Silverman, John Cremona, Nurdag\"{u}l Anbar Meidl, and Jan Stef\/fen M\"{u}ller for helpful comments, advise, and support while preparing this paper. We are especially grateful for the many useful suggestions of the two referees of the f\/irst draft of this, which we hope have improved the readability of the text a lot.

\pdfbookmark[1]{References}{ref}
\LastPageEnding

\end{document}